\newtheorem{lemma}{Lemma}
\newcommand{\norm}[1]{\left\Vert#1\right\Vert\xspace}
\newcommand{\expectn}[1]{\mathbb{E}\norm{#1}\xspace}
\icmltitlerunning{Distributed SAGA: Maintaining linear convergence rate with limited communication}
\begin{document}

\twocolumn[
\icmltitle{Distributed SAGA:\\Maintaining linear convergence rate with limited communication}



\icmlsetsymbol{equal}{*}

\begin{icmlauthorlist}
\icmlauthor{Cl\'ement Calauz\`enes}{crto}
\icmlauthor{Nicolas Le Roux}{crto}
\end{icmlauthorlist}

\icmlaffiliation{crto}{Criteo, Paris, France}
\icmlcorrespondingauthor{Cl\'ement Calauz\`enes}{c.calauzenes@criteo.com}
\icmlcorrespondingauthor{Nicolas Le Roux}{nicolas@le-roux.name}

\icmlkeywords{stochastic optimisation, large scale learning, distributed optimisation}

\vskip 0.3in
]



\printAffiliationsAndNotice{}  

\begin{abstract}
In recent years, variance-reducing stochastic methods have shown great practical performance, exhibiting linear convergence rate when other stochastic methods offered a sub-linear rate. However, as datasets grow ever bigger and clusters become widespread, the need for fast distribution methods is pressing. We propose here a distribution scheme for SAGA which maintains a linear convergence rate, even when communication between nodes is limited.
\end{abstract}

\section{Introduction}
Many problems in machine learning and optimisation can be cast as the minimisation of a finite but large sum of strongly convex functions, i.e.
\begin{align*}
f(w) &= \sum_{i=1}^N f_i(w) \; .
\end{align*}
In machine learning, for instance, each $f_i$ will be the loss for a particular datapoint and $N$ will be the size of the training set. Though the strong convexity might appear quite limiting, the use of an $\ell_2$ regulariser, quite common in machine learning, will satisfy this constraint

Methods to solve this problem have traditionally been split into two distinct classes: batch methods, dating back to~\textcite{cauchy1847methode}, which enjoy linear or super-linear convergence rates for strongly convex functions, and stochastic methods, stemming from the seminal work of~\textcite{robbins1951stochastic}, which enjoy sub-linear rates but with an iteration cost independent of the number of samples.

\textcite{leroux2012stochastic} introduced with SAG the first general stochastic optimisation method with a linear convergence rate for strongly convex functions, at the expense of a memory storage linear in the number of functions in the sum. This was quickly followed by other works in what are now called stochastic variance-reducing methods, such as  SDCA~\cite{Shalev-Shwartz2013sdca}, SVRG~\cite{johnson2013accelerating}, SAGA~\cite{defazio2014saga} or MISO~\cite{mairal2013optimization}. All these methods, however, assume that all the examples lie on the same machine and have immediate access to the vector of parameters. This has two consequences. First, as many of these methods have storage requirements linear in $N$, they are impractical for very large datasets. This is with the notable exception of SVRG which trades the memory requirement for a slower convergence rate (by a constant)\footnote{\textcite{babanezhad2015wasting} proposed a version of SVRG which would improve the constant factor without requiring more storage but the gains are in the early iterations.}. Second, they cannot take advantage of large clusters of machines. At a time where clusters of hundreds or even thousands of machines are commonplace, the gain in convergence speed obtained by these methods does not compensate for the speedup enjoyed by methods like L-BFGS (see, e.g.,~\cite{nocedal2006numerical}) which can easily distribute the gradient computations across the nodes of the cluster, the update step being performed on one machine.

The main reason stochastic methods are harder to distribute than batch methods such as L-BFGS is the cost of communication between nodes. Frequent communications between nodes increase the time per iteration as data transfers over a network are typically much slower than transfers between the RAM and the CPU. On the other hand, reducing the frequency of these communications increases the number of iterations required to reach the desired level of accuracy. Much of the research alleviated this problem by focusing on the multi-core setting, where communications are fast, and studying the impact of slower cores. Hogwild~\cite{recht2011hogwild} is a wildly successful implementation of a multi-core optimisation algorithm but does not address the issue of the data being spread across many nodes of the cluster.

Another popular approach to distributed stochastic methods is to use one or several parameter servers which are updated and called asynchronously by each of the workers. Downpour SGD~\cite{dean2012large} is one implementation and scales across thousands of machines. This method has been used to train large deep networks which give rise to non-convex loss where achieving linear convergence is less important since this convergence will only be towards a local optimum. However, it does not achieve a faster convergence rate when the loss function is convex.

The work closest to ours is the recent FADL~\cite{mahajan2013, mahajan2013b}, where the authors proposed a plugin method for distributing algorithms which achieved linear convergence and can be seen as a distributed variant of SVRG when used with the latter in inner loop. Their method required performing a line-search at each synchronisation and they do not provide insights on the difference between using batch and stochastic algorithms in inner loop, even though they explicitly use different hyper-parameter settings in their experiments. Our method differs in that we do not need to do a line search at every synchronisation, and we provide theoretical and practical understanding of the behaviour of a stochastic algorithm.

\section{Distributed SAGA}
For the remainder of this paper, we assume that we have $K$ machines, each having access to a subset of the datapoints. In particular, each machine will be able to store both its subset of the datapoints and their associated gradients~\footnote{Or, only a sufficient statistics, as detailed in the SAG paper.} in memory. Finally, communication costs between machines will be assumed high and the major bottleneck in optimisation~\footnote{A model with $10^{12}$ parameters, for instance, will use 4GB of storage and might take between 10s and 30s to be averaged between a set of machines.}.

In SAGA, one of the methods mentioned in the introduction, the last gradient computed for each datapoint is kept in memory. The important aspect, and the main different with a standard batch method, is that these gradients have been computed at different times using different values of the parameter vector. Since linear convergence is achieved despite this staleness, it seems natural to want to include stale gradients from the other nodes in the parameter updates. One must still be careful as, contrary to its original version, distributed SAGA, or dSAGA for short, cannot update all the gradients. We shall show how a simple modification can maintain linear convergence despite this hurdle.

Every once in a while, each node will communicate to the others its current position in parameter space as well as the sum of its stored gradients. The storage requirement of these two vectors is independent of the dataset size, which makes it efficient both in terms of communication and in terms of memory. This makes it possible for dSAGA to scale to datasets of any size, provided that the number of nodes grows linearly with the number of datapoints~\footnote{Increasing the number of nodes might however decrease the convergence rate.}.

This approach is very close to that of FADL~\cite{mahajan2013} which builds at each synchronisation $t$ and on each worker $k$ an approximation $f^t_k$ of the global objective function $f$. The biggest difference is that, while FADL performs a line-search at each synchronisation to find a new set of parameters, dSAGA does so by a simple average of the local parameters at each node. Further,~\textcite{mahajan2013} prove the linear convergence of FADL, albeit in a generic case w.r.t. the algorithm in inner loop. Thus, it provides a slower rate than gradient descent and no insight on how the convergence speed varies with the number of workers and the synchronisation frequency. In contrast, we focus on a specific algorithm for the inner loop, SAGA, in order to provide better practical insights on the variation of the convergence speed depending on the setting.

Besides its speed, one of the main selling points of SAGA is its total absence of hyper-parameters. This makes it well suited to a production environment where robustness is required. Since distributed algorithms structurally have more parameters than their non-distributed counterpart, such as the number of nodes and the synchronisation frequency, we provide an understanding of the variation of the speed with the hyper-parameters to limit the need for tuning.

\subsection{Algorithm}
We now describe dSAGA in terms of the parameter updates it performs rather than in terms of the loss it optimises as we believe it makes the approximations clearer. We will start with a reminder of the original SAGA algorithm to highlight the differences.

At time step $t$, SAGA does the following update:
\begin{algorithm}
\caption{Update for SAGA}
\label{alg:saga}
\begin{algorithmic}
\INPUT A current parameter vector $w^t$
\INPUT Stored gradient $g_i(\phi_i^t)$ for each $i$
\begin{enumerate}
\item Choose a $j$ at random.
\item Set $\phi_j^{t+1} = w^t$ and store $g_j(\phi_j^{t+1})$.
\item Leave $\phi$ unchanged for all the other points, i.e. $\phi_i^{t+1} = \phi_i^t$ for $i \neq j$.
\item Update $w$ using \\$w^{t+1}  = w^t {\rm -} \gamma \left[g_j(\phi_j^{t+1}) - g_j(\phi_j^t) + \frac{1}{N}\sum_i g_i(\phi_i^t)\right] \; $.
\end{enumerate}
\end{algorithmic}
\end{algorithm}

As we see, SAGA updates only one gradient at a time, using stale versions of all the other gradients. A point worth noting is that the algorithm also works if only the last term, the sum over all datapoints, is used to compute the update. Although no convergence rate has been proven for this slight variation~\footnote{This variation is very close to SAG, with the slight modification that the entry for point $j$ is updated after the parameter update and not before.}, it hints at the fact that $\frac{1}{N}\sum_i g_i(\phi_i^t)$ is a reasonable approximation of the current gradient.

Before moving to the description of dSAGA, we need to set a few notations:
\begin{itemize}
\item Each node has its own parameter vector $w_k$. Since we will run a new optimisation between each synchronisation between nodes, these parameters will have two indices. We thus use $w_k^{t, u}$ where $t$ is the index of the synchronisation and $u$ is the iteration since the last synchronisation.
\item The $i$-th point on node $k$ will be represented by the pair $(k, i)$. $f_{k,i}$ is its loss with associated gradient $g_{k, i}$. Thus the gradient on the $k$-th node is $g_k = \frac{1}{K}\sum_i g_{k,i}$.
\item $\hat{g}_k(w_k^{t,u})$ is the approximation of the average gradient on node $k$ at $w_k^{t,u}$. In other words, when node $k$ reaches $w_k^{t,u}$, we look at all the stored $g_{k,i}(\phi_i^{t,u})$ for the points on node $k$ and set $\hat{g}_k(w_k^{t,u}) = \frac{1}{N} \sum_i g_{k,i}(\phi_i^{t,u})$.
\item Each node will perform $U$ passes through the data before synchronising with the other nodes. Since there are $N/K$ datapoints on each node, this means that $U = uN/K$.
\end{itemize}

Rewriting the SAGA update on node $k$ in this context yields:
\begin{enumerate}
\item Choose a $j$ at random.
\item Set $\phi_{k,j}^{t, u+1} = w_k^{t, u}$ and store $g_{k,j}(\phi_{k,j}^{t, u+1})$.
\item Leave $\phi$ unchanged for all the other points, i.e. $\phi_{k,i}^{t, u+1} = \phi_{k,i}^{t, u}$ for $i \neq j$.
\item Update $w_k$ using $w_k^{t, u+1}  = w_k^{t,u} - \gamma \delta_k^{t,u}$ with
\begin{align*}
\delta_k^{t,u} &= g_{k,j}(\phi_{k,j}^{t, u+1}) - g_{k,j}(\phi_{k,j}^{t,u}) + \frac{1}{N_k}\sum_i g_{k,i}(\phi_{k,i}^{t,u})\\
&= g_{k,j}(\phi_{k,j}^{t, u+1}) - g_{k,j}(\phi_{k,j}^{t,u}) + \hat{g}_k(w_k^{t,u})\; .
\end{align*}
\end{enumerate}

One could think that all there is to do to distribute SAGA  is to add to the update the sum of the stored gradients which was communicated at the last synchronisation. However, when updating $w_k^{t,u}$ on node $k$, we would like to have an estimate of the gradient on the other nodes at the same point, i.e. $\widehat{g}_l(w_k^{t,u})$. What we have access to, however, is the estimate of the gradient on these nodes at the time of the synchronisation, i.e. $\widehat{g}_l(w_l^{t-1,+\infty})$. We make here the assumption that each optimisation is run to convergence before synchronisation. We thus need to obtain an estimate of  $g_l(w_k^{t,u})$. Assuming our ideal SAGA update would be $w_k^{t, u+1} = w_k^{t,u} - \gamma \delta_k^{t,u}$ with
\begin{align*}
\delta_k^{t,u}&= g_{k,j}(\phi_{k,j}^{t, u+1}) - g_{k,j}(\phi_{k,j}^{t,u}) + \sum_l g_l(w_k^{t,u}) \; ,
\end{align*}
we make the following approximation of $\sum_l g_l(w_k^{t,u})$:
\begin{align*}
\sum_l g_l(w_k^{t,u}) &= \sum_l \left[g_l(w_k^{t,u}) - g_l(w_k^{t,0})\right.\\
 &\quad + \left. g_l(w_k^{t,0}) - g_l(w_l^{t-1,+\infty}) + g_l(w_l^{t-1,+\infty})\right]\\
&\approx \sum_l \left[g_k(w_k^{t,u}) - g_k(w_k^{t,0}) \right.\\
&\quad \left.+ g_k(w_k^{t,0}) - g_k(w_l^{t-1,+\infty}) + g_l(w_l^{t-1,+\infty})\right]
\end{align*}
where we assumed that the Hessians on each node were the same, leading to $g_l(w) - g_l(v) = g_k(w) - g_k(v)$ for any $(w, v)$ pair.

$w_k^{t,0}$ is the initial point of the optimisation of node $k$ which we can choose as we please. Choosing $\displaystyle w_k^{t, 0} = \frac{1}{K} \sum_l w_l^{t-1, +\infty}$ and assuming the loss is locally quadratic, we have $\displaystyle \sum_l\left[g_k(w_k^{t,0}) - g_k(w_l^{t-1,+\infty})\right] = 0$ and then
\begin{align*}
\sum_l g_l(w_k^{t,u}) &\approx K \left[g_k(w_k^{t,u}) - g_k(w_k^{t,0})\right] + \sum_l g_l(w_l^{t-1,+\infty}) \; .
\end{align*}
Replacing $g_k(w_k^{t,u})$ with the unbiased estimate used in SAGA, this leads to the dSAGA update describe in Alg.~\ref{alg:dsaga}.
\begin{algorithm}
\caption{dSAGA update on node $k$}
\label{alg:dsaga}
\begin{algorithmic}
\INPUT{A current parameter vector $w_k^{t, u}$ }
\INPUT{Stored gradient $g_{k, i}(\phi_{k, i}^{t, u})$ for each $i$}
\INPUT{The average of all gradient estimates at the last synchronisation $\frac{1}{K}\sum_l g_l(w_l^{t-1,\infty})$}\\
Recompute $g_k(w_k^{t,0}$. This is the \emph{local gradient pass}.\\
For $u = 1$ to $UN/K$, do:
\begin{enumerate}
\item Choose a $j$ at random.
\item Set $\phi_{k,j}^{t, u+1} = w_k^{t, u}$ and store $g_{k,j}(\phi_{k,j}^{t, u+1})$.
\item Leave $\phi$ unchanged for all the other points, i.e. $\phi_{k,i}^{t, u+1} = \phi_{k,i}^{t, u}$ for $i \neq j$.
\item Update $w_k$ using
\begin{align*}
w_k^{t, u+1}  &= w_k^{t,u} + \gamma  g_k(w_k^{t,0}) - \gamma \frac{1}{K}\sum_l g_l(w_l^{t-1,\infty}) \\
&\; - \gamma \left[g_{k,j}(\phi_{k,j}^{t, u+1}) - g_{k,j}(\phi_{k,j}^{t,u}) + \hat{g}_k(w_k^{t,u})\right] \; .
\end{align*}
\end{enumerate}
\end{algorithmic}
\end{algorithm}

We now study the convergence speed of dSAGA, both theoretically and empirically.
\section{Convergence Proof in the Quadratic Case}
\label{sec:theory}

Let us assume we wish to minimise
\begin{align*}
f(w) &= \frac{1}{K} \sum_{k=1}^K f_k(w)\\
     &= \frac{1}{K} \sum_{k=1}^K \frac{K}{N}\sum_{j = 1}^{N/K} f_{k,j}(w) \; ,
\end{align*}

We are interested in upper bounding the rate at which the expected excess error $\expectn{w^{t,0} - w^*}$ decreases from one synchronisation to another where the expectation is taken over the sampled sequences of examples in inner loops.

We have the following decomposition, that we will also study in the experimental part (Sec. \ref{sec:xp}).
\begin{align}
	\hspace{1em}&\hspace{-1em}\expectn{w^{t+1,0} - w^*} \nonumber\\
	& \leq \frac{1}{K}\sum_{k} \expectn{w_k^{t,u} - w^*} \nonumber\\
	& \leq \underbrace{\frac{1}{K}\sum_{k} \expectn{w_k^{t,u} - w_k^{t,\infty}}}_{\text{average inner error}} + \underbrace{\frac{1}{K}\sum_{k} \expectn{w_k^{t,\infty} - w^*}}_{\text{discrepancy error}}
	 \label{eq:regret_decomposition}
\end{align}
The first term, the \emph{average inner error} represents how much we can hope to reduce the error without further synchronisation. It decreases at the linear speed of the inner SAGA algorithm. The second, the \emph{discrepancy error} represents the incompressible error due to the lack of information on each node from the other nodes, that we cannot reduce without further synchronisation.

In this paper, we prove the convergence of the dSAGA algorithm when each inner loop is run to convergence, i.e. for $u \to +\infty$. Thus, we focus on the convergence of the discrepancy error.

\begin{lemma}[Convergence]
\label{lem:convergence}
We assume that, for each $k$, we have
\begin{align*}
\frac{K}{N} \sum_{j = 1}^{N/K} f_{jk}(w) &= \frac{1}{2} (w - w_k^*)^\top H_k (w - w_k^*) \; ,
\end{align*}
each $H_k$ being positive definite and that for any $t$,
$$w^{t+1,0} = \frac{1}{K}\sum_k w_k^{t,\infty}\,.$$

If $\max_{k,l}\|I - H_k^{-1} H_l\| \leq \rho < 1$, we have for each $l$
\begin{align*}
\|w_l^{t, 0} - w^*\|   & \leq \left(1 - \frac{1}{K}\right)^t\rho^t C_0 \; ,
\end{align*}
where $w^*$ is the global minimising of $f$ and $C_0$ a constant depending on the starting point.
\end{lemma}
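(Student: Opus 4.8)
The plan is to exploit the two simplifying assumptions --- quadratic local objectives and inner loops run to convergence --- to collapse the stochastic, multi-index dynamics into a single deterministic linear recursion on the synchronised iterate. Write $\bar w^t := w_k^{t,0} = \frac{1}{K}\sum_l w_l^{t-1,\infty}$, which is identical on every node by the synchronisation rule, and set $e^t := \bar w^t - w^*$. Since $w_l^{t,0} = \bar w^t$ for all $l$, the quantity to bound is exactly $\|e^t\|$, so I would aim to establish the one-step contraction $\|e^{t+1}\| \le (1-\tfrac1K)\rho\,\|e^t\|$; iterating it and setting $C_0 = \|e^0\|$ then gives the claim.

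The first step is to compute the inner-loop fixed point $w_k^{t,\infty}$ in closed form. For a quadratic $f_k$ one has $g_k(w) = H_k(w-w_k^*)$, and in expectation the dSAGA inner update is a gradient step on a local surrogate whose gradient is $g_k(w) - g_k(\bar w^t) + \tfrac{1}{K}\sum_l g_l(\bar w^t)$. The choice $\bar w^t = \tfrac1K\sum_l w_l^{t-1,\infty}$ is precisely what makes the cross terms cancel, as noted when deriving Alg.~\ref{alg:dsaga}, so that the communicated correction behaves as the global gradient $\nabla f(\bar w^t)$. Using positive-definiteness of $H_k$ (so SAGA on a strongly convex quadratic does converge to the surrogate minimiser) and setting the expected increment to zero gives $g_k(w_k^{t,\infty}) = g_k(\bar w^t) - \nabla f(\bar w^t)$, i.e. $w_k^{t,\infty} = \bar w^t - H_k^{-1}\nabla f(\bar w^t)$. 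In words, \emph{each node performs a quasi-Newton step using its own Hessian but the shared global gradient.}

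Next I would average over $k$. Writing $\bar H := \tfrac1K\sum_k H_k$ and using $\nabla f(w) = \bar H(w-w^*)$, which follows from $w^* = \bar H^{-1}\tfrac1K\sum_k H_k w_k^*$, the synchronisation rule yields $\bar w^{t+1} = \bar w^t - \bar M\,\nabla f(\bar w^t)$ with $\bar M := \tfrac1K\sum_k H_k^{-1}$, hence $e^{t+1} = (I - \bar M\bar H)\,e^t$. The contraction factor is then controlled by the identity $I - \bar M\bar H = \tfrac{1}{K^2}\sum_{k,l}\bigl(I - H_k^{-1}H_l\bigr)$: the $K$ diagonal terms ($k=l$) vanish identically, leaving $K(K-1)$ off-diagonal terms each of operator norm at most $\rho$, so $\|I - \bar M\bar H\| \le \tfrac{K(K-1)}{K^2}\rho = (1-\tfrac1K)\rho < 1$. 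Submultiplicativity of the operator norm then gives $\|e^t\| \le \bigl[(1-\tfrac1K)\rho\bigr]^t\|e^0\|$, which is the stated bound.

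The main obstacle is the second step: justifying that the stochastic SAGA inner loop may be replaced by its deterministic surrogate minimiser, and that the communicated correction really acts as $\nabla f(\bar w^t)$ rather than as the stale average $\tfrac1K\sum_l g_l(w_l^{t-1,\infty})$ of node gradients at the previous fixed points. The cancellation of the cross terms rests on the local-quadratic approximation built into the algorithm, and it is exactly this identification that makes both $\rho$ (through $I - H_k^{-1}H_l$) and $1-\tfrac1K$ (through the vanishing diagonal) appear; getting it exactly right, rather than with a stale gradient that would introduce an extra delay and degrade the recursion, is the delicate point. A secondary, more routine check is that $I - \bar M\bar H$ need not be symmetric, so the contraction must be read in the same operator norm used to define $\rho$, and one should confirm that $\rho<1$ indeed forces $\|I-\bar M\bar H\|<1$.
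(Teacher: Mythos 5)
There is a genuine gap, and it sits exactly where you flagged ``the main obstacle'': the identification of the communicated correction with $\nabla f(\bar w^t)$ is false under the lemma's assumptions, and the paper's proof is structured precisely to avoid needing it. Running the inner loop of Alg.~\ref{alg:dsaga} to convergence gives the fixed-point condition
\begin{align*}
g_k(w_k^{t,\infty}) &= g_k(\bar w^t) - \frac{1}{K}\sum_l g_l(w_l^{t-1,\infty}) \; ,
\end{align*}
where the last term is the \emph{stale} average, evaluated at the previous local optima. Its deviation from the quantity you substitute for it is
\begin{align*}
\frac{1}{K}\sum_l g_l(w_l^{t-1,\infty}) - \nabla f(\bar w^t) &= \frac{1}{K}\sum_l H_l\bigl(w_l^{t-1,\infty} - \bar w^t\bigr) \; ,
\end{align*}
and while the unweighted average of the deviations $w_l^{t-1,\infty} - \bar w^t$ vanishes by the choice of $\bar w^t$, the $H_l$-weighted average does not unless all the $H_l$ are equal. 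The cancellation you invoke from the algorithm's derivation is the one based on the \emph{equal-Hessians} approximation $g_l(w)-g_l(v) \approx g_k(w)-g_k(v)$; inside the lemma the Hessians are genuinely distinct (otherwise $\rho$ could be taken to be $0$ and the statement is vacuous), and this deviation term is of the same order as the error you are tracking, so it cannot be discarded. Consequently your closed recursion $e^{t+1} = (I - \bar M \bar H)\,e^t$ is not the dynamics of the algorithm: the true dynamics is
\begin{align*}
w_k^{t,\infty} - w^* &= \frac{1}{K}\sum_{l\neq k} \bigl(I - H_k^{-1}H_l\bigr)\bigl(w_l^{t-1,\infty} - w^*\bigr) \; ,
\end{align*}
which couples the \emph{individual} node errors and does not close on their average alone.

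The paper resolves this by tracking $\max_k \|w_k^{t,\infty} - w^*\|$ rather than the averaged error: in the displayed recursion the $l=k$ term vanishes identically (this is where the factor $1-\tfrac1K$ comes from, playing the role of your vanishing diagonal), each of the $K-1$ remaining terms has norm at most $\rho\,\|w_l^{t-1,\infty}-w^*\|$, and taking the maximum over $k$ closes the recursion on this max quantity; the bound on $\|w_l^{t,0}-w^*\|$ then follows since $w_l^{t,0}$ is the average of the $w_l^{t-1,\infty}$. Your matrix identity $I - \bar M\bar H = \tfrac{1}{K^2}\sum_{k,l}(I - H_k^{-1}H_l)$ and the resulting bound $\|I-\bar M\bar H\| \le (1-\tfrac1K)\rho$ are correct as matrix algebra, and it is a pleasant coincidence that they reproduce the paper's rate, but they are applied to a recursion the iterates do not satisfy, so the argument as written does not prove the lemma.
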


\begin{proof}
Using the update equation from dSAGA, running the local optimisation to convergence yields
\begin{align*}
g_k(w_k^{t, \infty}) = & g_k(w_k^{t,0}) - \frac{1}{K}\sum_l g_l(w_l^{t-1, \infty})
\end{align*}
with $g_k$ the derivative of $f_k$.
Since $f_k$ is quadratic, we have $g_k(w) = H_k(w - w_k^*)$ and the previous equation may be rewritten (multiplying by $H_k^{-1}$ and adding $w_k^*$)
\begin{align*}
w_k^{t, \infty} &= w_k^{t, 0} - \frac{1}{K}\sum_l H_k^{-1} H_l(w_l^{t-1, \infty} - w_l^*) \; .
\end{align*}
and thus, subtracting $w^*$ on both sides of the equation yields
\begin{align*}
w_k^{t, \infty} - w^*&= w_k^{t, 0} - w^* - \frac{1}{K}\sum_l H_k^{-1} H_l(w_l^{t-1, \infty} - w_l^*) \; .
\end{align*}
Using the definition of $f$, we have
\begin{align*}
\sum_l H_l w^* &= \sum_l H_l w_l^*
\end{align*}
and thus
\begin{align*}
w_k^{t, \infty} - w^*&= w_k^{t, 0} - w^* - \frac{1}{K}\sum_l H_k^{-1} H_l(w_l^{t-1, \infty} - w^*) \; .
\end{align*}
Further, since we chose $w_k^{t, 0} = \frac{1}{K} \sum_l w_l^{t-1, \infty}$, we have
\begin{align*}
w_k^{t, 0} - w^* &= \frac{1}{K} \sum_l w_l^{t-1, \infty} - w^*\\
&= \frac{1}{K} \sum_l (w_l^{t-1, \infty} - w^*) \; ,
\end{align*}
and
\begin{align*}
w_k^{t, \infty} - w^*&= \frac{1}{K}\sum_l (I - H_k^{-1} H_l)(w_l^{t-1, \infty} - w^*)\\
&= \frac{1}{K}\sum_{l\neq k} (I - H_k^{-1} H_l)(w_l^{t-1, \infty} - w^*) \; .
\end{align*}
We may now upper bound $\|w_k^{t, \infty} - w^*\|$ using
\begin{align*}
\hspace{1em}&\hspace{-1em}\|w_k^{t, \infty} - w^*\| \\
&\leq \frac{1}{K}\sum_{l\neq k} \|I - H_k^{-1} H_l\|\|w_l^{t-1, \infty} - w^*\|\\
&\leq \frac{K-1}{K} \max_{l} \|I - H_k^{-1} H_l\|\|w_l^{t-1, \infty} - w^*\|\\
&\leq \frac{K-1}{K} \max_{l} \|I - H_k^{-1} H_l\|\max_l \|w_l^{t-1, \infty} - w^*\|\; .
\end{align*}
If we assume that there is a $\rho$ such that
\begin{align*}
\max_{l} \|I - H_k^{-1} H_l\| &\leq \rho \; ,
\end{align*}
then taking the maximum over $k$ yields
\begin{align*}
\max_k \|w_k^{t, \infty} - w^*\| &\leq \frac{K-1}{K} \rho \max_l\|w_l^{t-1, \infty} - w^*\| \; .
\end{align*}
This concludes the proof.
\end{proof}

Now that we have an asymptotic guarantee of convergence, we would like to be able to estimate the convergence ratio $\rho$, at least in a very simple setting.

\begin{lemma}[Value of $\rho$]
\label{lem:rho}
We assume to minimise a linear model under gaussian data distributed over $K$ nodes. More precisely, $f_k(w) = (w-w_k^*)^T H_k (w - w_k^*)$ with
\begin{align*}
H_k &= \frac{K}{N} \sum_{i=1}^{N/K} x_{k,i}x_{k,i}^T
\end{align*}
where the datapoints $x$ have been drawn from a Gaussian with mean $0$ and covariance $\Sigma$ in dimension $d$,
\begin{align*}
x &= \Sigma^{1/2} u ~~~~~\text{with}~ u \sim \mathcal{N}(0, I) \; .
\end{align*}

Then, for $N\to+\infty$, $d\to +\infty$ and $\frac{dK}{N} \to \gamma$, we have
\begin{align*}
\|I - H_i^{-1}H_j\| &\to \frac{2\sqrt{\gamma}}{1 - \sqrt{\gamma}} \; .
\end{align*}
\end{lemma}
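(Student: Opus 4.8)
The plan is to recognize each $H_k$ as a scaled Wishart (sample covariance) matrix and to reduce $\|I - H_i^{-1}H_j\|$ to the location of the edges of the limiting spectral distribution of a ratio of two independent sample covariance matrices. Writing $x_{k,i} = \Sigma^{1/2}u_{k,i}$ gives $H_k = \Sigma^{1/2}\hat S_k \Sigma^{1/2}$ with $\hat S_k = \frac1n\sum_{i=1}^n u_{k,i}u_{k,i}^\top$ and $n = N/K$, a sample covariance of $n$ isotropic Gaussians in dimension $d$ whose aspect ratio is $d/n = dK/N \to \gamma$. Then $H_i^{-1}H_j = \Sigma^{-1/2}\hat S_i^{-1}\hat S_j\Sigma^{1/2}$ is similar to $\hat S_i^{-1}\hat S_j$, so the eigenvalues of $I - H_i^{-1}H_j$ do not depend on $\Sigma$ and I may take $\Sigma = I$. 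Since $\hat S_i^{-1}\hat S_j$ is in turn similar to the symmetric positive-definite matrix $\hat S_i^{-1/2}\hat S_j\hat S_i^{-1/2}$, the target reduces to $\max_\lambda |1-\lambda|$ over the limiting spectrum of this $F$-matrix.

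For the spectrum I would invoke free probability. Because $\gamma < 1$, each $\hat S_k$ is invertible for large $n$ and obeys the Marchenko--Pastur law on $[(1-\sqrt\gamma)^2,(1+\sqrt\gamma)^2]$; being built from independent Gaussians, $\hat S_i$ and $\hat S_j$ are asymptotically free. Hence the limiting law of $\hat S_i^{-1}\hat S_j$ is the free multiplicative convolution of the inverse-MP and MP laws, and I would compute its $S$-transform as the product $S_{\hat S_i^{-1}}(z)\,S_{\hat S_j}(z)$ using $S_{\mathrm{MP}}(z) = (1+\gamma z)^{-1}$ and the inversion rule $S_{X^{-1}}(z) = 1/S_X(-1-z)$, obtaining a rational $S(z)$. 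Translating back to the Stieltjes transform yields a parametric description $s(z)$ of the spectral edge whose branch points, the solutions of $s'(z)=0$, locate the endpoints $\lambda_\pm$.

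Exchanging the roles of $i$ and $j$ sends each eigenvalue $\lambda$ to $1/\lambda$, so the limiting law is invariant under $\lambda\mapsto 1/\lambda$; thus $\lambda_-\lambda_+ = 1$ and the maximum of $|1-\lambda|$ is attained at the upper edge, $\lambda_+ - 1$. The stated constant corresponds to $\lambda_+ = \frac{1+\sqrt\gamma}{1-\sqrt\gamma}$, which is exactly the ratio of the square roots of the extreme Marchenko--Pastur eigenvalues; the goal is therefore to show that the branch point of $s(z)$ collapses to this value, whence $\|I-H_i^{-1}H_j\| \to \lambda_+ - 1 = \frac{2\sqrt\gamma}{1-\sqrt\gamma}$.

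The main obstacle is pinning down this upper edge. Carried out directly, the $S$-transform of the ratio of two independent Wisharts produces the multivariate-$F$ edge $\lambda_+ - 1 = \frac{(1+\sqrt{2\gamma-\gamma^2})^2}{(1-\gamma)^2} - 1$, which is strictly larger than $\frac{2\sqrt\gamma}{1-\sqrt\gamma}$ and disagrees even at leading order in $\sqrt\gamma$ (by a factor $\sqrt2$, since both fluctuations of $\hat S_i^{-1}$ and $\hat S_j$ contribute). Recovering the authors' value therefore seems to require a further approximation, for instance retaining the fluctuation of a single factor while treating the other as $I$, and making that step precise is where the real work lies. A second delicate point is that $I - H_i^{-1}H_j$ is non-normal, so its operator norm can exceed its spectral radius; I would interpret $\|\cdot\|$ through the symmetric conjugate $I - \hat S_i^{-1/2}\hat S_j\hat S_i^{-1/2}$, whose operator norm equals $\max_\lambda|1-\lambda|$, and verify that the endpoints, rather than typical eigenvalue ratios, genuinely drive the norm.
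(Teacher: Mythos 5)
Your route is genuinely different from the paper's, and it is the more rigorous of the two; the comparison is instructive. The paper does not touch free probability or the F-matrix law. After reducing to Wisharts as you do, it decouples the two factors: it asserts that, since the eigenvectors of $M=H_i$ and $N=H_j$ are independently uniformly distributed, ``the expected norm of $M^{-1}N$ is $\frac{Tr(M^{-1})\,\|N\|}{d}$'', then evaluates the pieces separately via Marchenko--Pastur, $\frac{1}{d}Tr(H_i^{-1}) \to \frac{1}{1-\gamma}$ and $\|H_j\| \to (1+\sqrt{\gamma})^2$, giving $\|H_i^{-1}H_j\| \to \frac{(1+\sqrt{\gamma})^2}{1-\gamma} = \frac{1+\sqrt{\gamma}}{1-\sqrt{\gamma}}$, and finally subtracts $1$. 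This is precisely the ``average out one factor, keep the fluctuations of the other'' step you predicted would be needed: $H_i^{-1}$ is effectively replaced by $\frac{1}{1-\gamma}I$ against the top eigenvector of $H_j$. As you note, that step is not exact --- it amounts to evaluating the quadratic form at the top eigenvector of $H_j$, which estimates the norm from below rather than computing it --- and the rigorous limit of the symmetrized spectrum is the Wachter/F-matrix edge $\frac{\left(1+\sqrt{2\gamma-\gamma^2}\right)^2}{(1-\gamma)^2}-1$, larger by a factor of roughly $\sqrt{2}$ for small $\gamma$. So your free-probability computation cannot be ``completed'' to recover the stated constant; the obstacle you flagged is not a gap in your argument but a genuine flaw (or deliberate approximation) in the lemma and its proof, which the authors implicitly concede right after the proof by saying the result ``should only serve as a rough guide to the actual convergence rate''. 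The paper also silently identifies $\|I - H_i^{-1}H_j\|$ with $\|H_i^{-1}H_j\| - 1$ and never addresses the non-normality issue you raise. In short, your approach buys the exact limiting edge (and exposes the discrepancy) at the cost of heavier machinery; the paper's buys a three-line closed form at the cost of rigour, and the two answers genuinely differ.
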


\begin{proof}
Denoting $r_{ij} = \|I - H_i^{-1} H_j\|$, we see that $r_{ij}$ is small when $H_i$ and $H_j$ are similar. This makes sense as, if the problems on different nodes are similar, our approximation will be good and the convergence will be fast.

From our assumptions, we can write
\begin{align*}
H_i &= \frac{K}{N} \sum_{i} \Sigma^{1/2} u_{k,i} u_{k,i}^T\Sigma^{1/2}\; .
\end{align*}
where $\hat{\mu}_k = \frac{K}{N} \sum_{i} u_{k,i}$ is the empirical average on node $k$.

We know that $\frac{K}{N} \sum_{i}u_{k,i} u_{k,i}^T$ follows a Wishart distribution $\mathcal{W}_d(I, \frac{N}{K})$. In the limit of $d \rightarrow \infty$, $N \rightarrow \infty$, $d/N \rightarrow \gamma$, we have the following results:
\begin{itemize}
\item The eigenvalues of a Wishart matrix are uniformly distributed (by symmetry)
\item The norm of a Wishart matrix drawn from $\mathcal{W}_d(I, \frac{N}{K})$ converges almost surely to $(1 + \sqrt{\gamma})^2$ \cite{geman1980limit}
\item The empirical eigenvalue distribution converges almost surely to $$\displaystyle\mu_{\gamma}(x) = \frac{\sqrt{4\gamma - (x-1-\gamma)^2}}{2\pi\gamma x}\chi(x)$$ where $\chi$ denotes the characteristic function of the interval $[(1 - \sqrt{\gamma})^2, (1 + \sqrt{\gamma})^2]$ \cite{jonsson1982some}.
\end{itemize}

Since the eigenvectors of both $M$ and $N$ are uniformly distributed over the unit sphere (by symmetry), we have that the expected norm of $M^{-1}N$ is $\frac{Tr(M^{-1}) \|N\|}{d}$.

Knowing the empirical eigenvalue distribution, we can compute the limit of $Tr(H_i^{-1})$ using
\begin{align*}
\lim_{N/K \rightarrow \infty} Tr(H_i^{-1}) &= d\int_{(1 - \sqrt{\gamma})^2}^{(1 + \sqrt{\gamma})^2}\frac{\sqrt{4\gamma - (x-1-\gamma)^2}}{2\pi\gamma x^2} \; dx\\
    &= \frac{d}{1 - \gamma} \; .
\end{align*}
Thus, we have $\displaystyle \|H_i^{-1}H_j\| = \frac{(1 + \sqrt{\gamma})^2}{1 - \gamma}$
and
\begin{align*}
\|I - H_i^{-1}H_j\| &= \frac{(1 + \sqrt{\gamma})^2}{1 - \gamma} - 1\\
    &= \frac{2\sqrt{\gamma}}{1 - \sqrt{\gamma}} \; .
\end{align*}
This concludes the proof.
\end{proof}
We emphasize that this result is based on several assumptions not verified in practice and should only serve as a rough guide to the actual convergence rate.

\section{Real-world performance of dSAGA}
\label{sec:xp}
We now study the empirical performance of dSAGA on several datasets for various settings of the hyper-parameters $U$ and $K$. First, we want to know how well dSAGA performs compared to L-BFGS which is the most popular algorithm for distributed convex optimisation when reaching high precision is important~\cite{agarwal14a}. Second, we want to know how rarely we can communicate between nodes while maintaining an acceptable convergence speed. Finally, we explore how dSAGA scales with the number of nodes $K$.

\subsection{Details of the experiments}
We chose three datasets to study the behaviour of dSAGA:
\begin{itemize}
\item ALPHA\footnote{ftp://largescale.ml.tu-berlin.de/largescale/alpha} is a dataset from the Large-Scale Pascal Challenge containing $N=5e5$ samples in dimension $d=500$. Even though the size of this dataset is too small for distributed learning to be meaningful, its small dimension makes the comparison between theoretical and empirical values of constants such as $\rho$ feasible.
\item CriteoSmall\footnote{http://labs.criteo.com/downloads/2014-kaggle-display-advertising-challenge-dataset} is a dataset released by Criteo in the context of a Kaggle challenge. It contains $N=4.5e7$ binary samples in dimension $N=1e6$, each sample having approximately $100$ nonzero components.
\item CriteoLarge\footnote{http://labs.criteo.com/2013/12/download-terabyte-click-logs} is one of the largest public datasets for classification and contains $N=4e9$ samples in dimension $d=1e6$. The sparsity is the same as for CriteoSmall.
\end{itemize}

We minimised an $\ell_2$-regularised logistic regression. For dSAGA, we set the stepsize of the inner loop at the theoretical value provided for SAGA~\cite{defazio2014saga}. Similar to what has been done by~\textcite{agarwal14a} and~\cite{defazio2014saga}, we initialised both L-BFGS and dSAGA with one pass of stochastic gradient in all our experiments. The convergence speed was measured in log-excess error as a function of number of passes through the data. We decided against using wall clock time as this was heavily dependent on the specific implementation of the distributed system, Spark in our case, and clouded actual differences between optimisation algorithms.

Finally, dSAGA requires a local gradient pass after each synchronisation, which costs an extra pass over the data. Unless otherwise mentioned, this was accounted for in all the figures.

\subsection{Communication at every pass}
\label{sec:every_pass}
We first compare dSAGA and L-BFGS when synchronisation between nodes happens after only one optimisation pass through the data, i.e. $U=1$. This setting is the most detrimental to dSAGA as, due to local gradient pass, only half of the passes through the data are effectively used for optimisation. Fig.~\ref{fig:one_pass} shows that dSAGA outperforms L-BFGS on ALPHA, performs comparably on CriteoSmall but is slower than L-BFGS on CriteoLarge.

Setting $U=1$, however, is only useful if the local convergence on each node, or inner optimisation, is much faster than the global convergence, or outer optimisation. If each local optimisation is slow, then it should be beneficial to reduce the synchronisation frequency, i.e. increase $U$, to improve the overall speed. Section \ref{sec:comm_freq} explores the impact of $U$ on the convergence speed.

\begin{figure*}[h]
\includegraphics[width=0.32\textwidth]{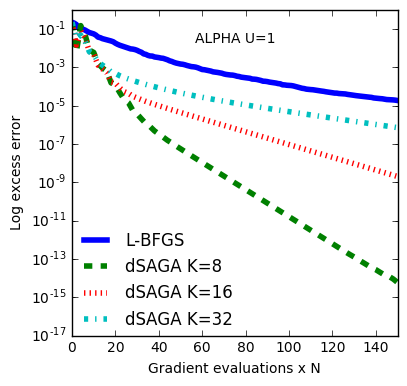}
\includegraphics[width=0.32\textwidth]{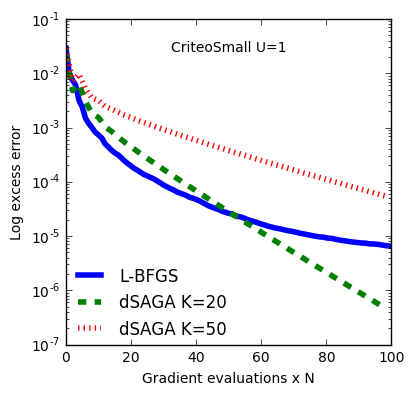}
\includegraphics[width=0.32\textwidth]{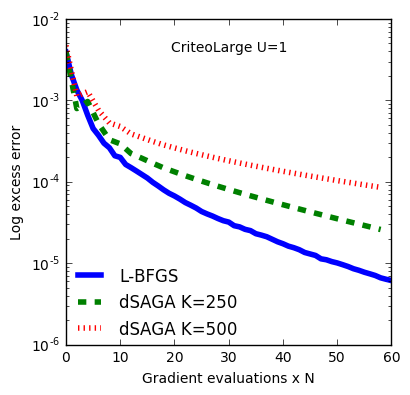}
\caption{Comparison of convergence speed when synchronising every pass on the data.}
\label{fig:one_pass}
\end{figure*}


\subsection{dSAGA in the large cluster regime}
\label{sec:increasing_K}
As the storage cost of dSAGA scales linearly with the number of samples on each node, maintaining a constant storage cost per node requires the number of nodes $K$ to scale linearly with the dataset size. For large datasets, $K$ can be in the hundreds to maintain acceptable local storage costs.

Increasing $K$ has two consequences. First, the amount of communication increases as each node must receive an update from every other node. Depending on the underlying implementation (e.g., tree aggregation or flooding), the complexity of such a communication varies between ${\cal O}\left(\log(K)\right)$ and ${\cal O}(K)$. Further, as the number of different local optimisation problems increases, so is the probability of a high discrepancy between several local optima, thus slowing down the optimisation.

The first issue is inherent to any distributed algorithm and can be tackled by reducing the communication frequency, which will be covered in the next section. We focus now on the second issue, studying how increasing $K$ affects the convergence speed. Section~\ref{sec:theory}, and in particular Eq.~\ref{eq:regret_decomposition}, show that two components adversely impact that speed.

First, each local hessian $H_k$ gets further apart from the average Hessian $H$, increasing the discrepancy between local optimisations and limiting the improvement one can obtain without synchronisation. Second, SAGA's convergence speed is dependent on the number of datapoints, equal to $N/K$, and local optimisation slows down when $K$ is too large. This degradation is especially important when $N/K$ gets close to the condition number $\kappa=\frac{L}{\mu}$.

Fig.~\ref{fig:one_pass} shows how the convergence speed of dSAGA degrades when $K$ increases. The degradation occurs on all datasets and does so much earlier when the number of samples is $N$ is small. Thus, our recommendation is to set $K$ to the minimum value allowing each local node to store its samples and gradients.


\subsection{Reducing the communication frequency}
\label{sec:comm_freq}
When reducing the frequency of communication, each inner optimisation runs closer to convergence. In the extreme limit of $U=+\infty$, each local optimisation will have converged to a different optimum and no communication will have occurred, leading to a poor global solution. The goal is thus to set $U$ large enough so that each local optimisation gets close to convergence, but not too large so that local parameter vectors do not stray too far from each other.

Fig.~\ref{fig:latency_variations} shows the impact of $U$ on the global convergence speed. As increasing $U$ compounds two effects, the beneficial one of reducing ``wasted'' local gradient passes post-synchronisation over the data and the detrimental one of potentially over-optimising local functions, we split the results in two columns. The left column only counts optimisation passes through the data. With this metric, increasing $U$ can only deteriorate the convergence speed. The right column takes every pass into account, thus providing a comparison closer to actual performance.

We first notice, that except with $U=12$ for ALPHA, increasing $U$ does not degrade the performance in terms of optimisation passes. This means, for all these datasets, the limiting factor is the local convergence. This is good news as this means that we can increase $U$, thus reducing the communication overhead, without loss of performance. This is confirmed with the plots on the right column when larger values of $U$ consistently outperform smaller ones.

With reduced communication frequency, dSAGA now clearly outperforms L-BFGS on CriteoSmall and is comparable on CriteoLarge. Unfortunately, computations were too intensive to run exhaustive experiments on the impact of $U$ on the larger datasets.

These results means that the extra-pass on the data required after synchronisation in dSAGA as well as the communication cost can be absorbed by increasing $U$. In the end, the convergence speed of dSAGA is quite robust to limited communications, which makes the algorithm competitive when communication cost is high.

\begin{figure}[h]
\includegraphics[width=0.5\textwidth]{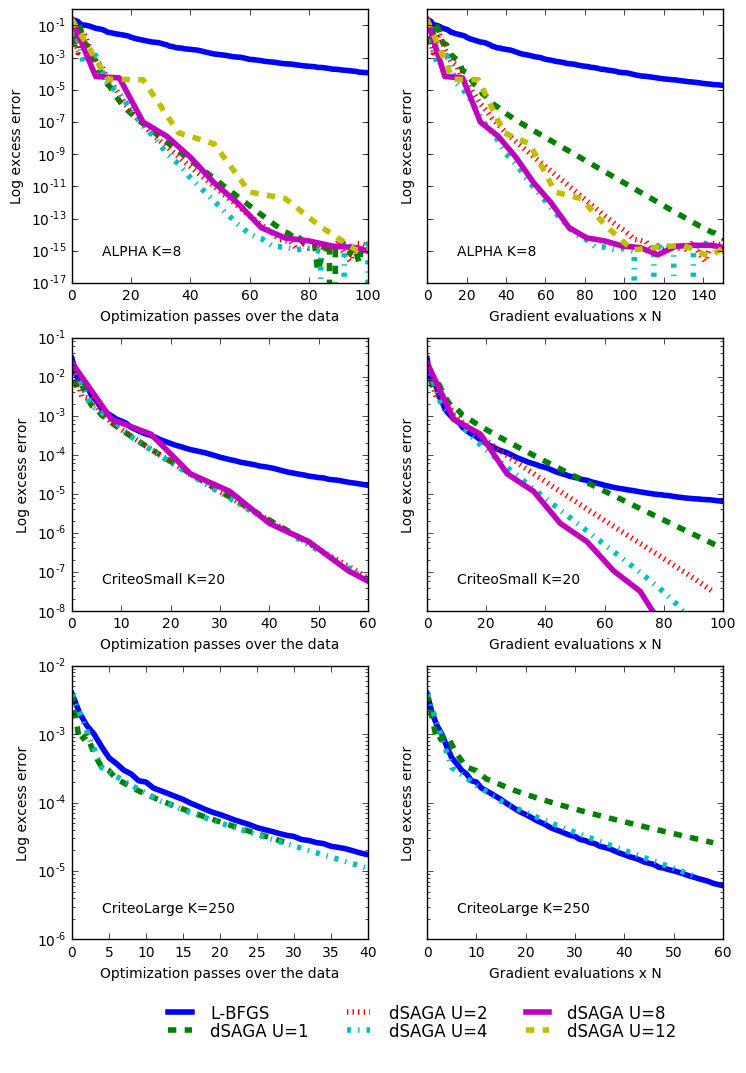}
\caption{Impact of $U$ on the convergence speed. The left column only counts optimisation passes while the right column takes all passes into account, thus being more representative of real performance. dSAGA is robust to decreased communications and is competitive or beats L-BFGS on all datasets.}
\label{fig:latency_variations}
\end{figure}

\subsection{Analysis of the inner and outer convergence speed}
The experiments shown in Fig.~\ref{fig:latency_variations} showed we can increase the latency without impacting the convergence speed, which indicates that the \emph{inner error} term dominates the \emph{discrepancy error} term. Thus, it is likely that the degradation with $K$ we observe in Fig.~\ref{fig:one_pass} is due to the degradation of the convergence speed of the inner SAGA algorithm.

To observe this more precisely, we let the inner loops run to convergence by setting $U=100$ to observe the empirical convergence rates of the \emph{inner error} and of the \emph{discrepancy error}.
We denote $\tilde\rho_K$ the reduction in excess error of $f$ between two consecutive synchronisations, i.e.
\begin{align}
\tilde\rho_K &= \frac{f(w_k^{t+1,0}) - f^*}{f(w_k^{t,0}) - f^*} \; .
\end{align}
We also denote
\begin{align}
\hat\rho_K &= \left(1-\frac{1}{K}\right)\frac{2\sqrt{\gamma}}{1 - \sqrt{\gamma}}
\end{align}
the theoretical valued predicted by Lemma~\ref{lem:rho}.
Lemma~\ref{lem:convergence} tells us that, if the local optimisations did indeed converge, we should have $\hat\rho_K = \tilde\rho_K$.
Then, we denote $\tilde\alpha_K$ the reduction in the excess error of $f$ during the inner optimisation after one pass over the data, taking the worst one over all the nodes:
\begin{equation}
\tilde\alpha_K = \max_k \frac{f\left(w_k^{t,u+\frac{N}{K}}\right) - f\left(w_k^{t,\infty}\right)}{f(w_k^{t,u}) - f\left(w_k^{t,\infty}\right)} \; .
\end{equation}
Finally, we denote $\tilde\omega_K$ the empirical convergence speed of SAGA on the inner function,
\begin{equation}
\tilde\omega_K = \max_k \frac{f_k^t\left(w_k^{t,u+\frac{N}{K}}\right) -  f_k^t\left(w_k^{t,\infty}\right)}{f_k^t(w_k^{t,u}) - f_k^t\left(w_k^{t,\infty}\right)} \; .
\end{equation}
where $$f_k^t(w) = f_k(w) + \frac{1}{K}\!\!\sum_l \left(\hat g_l(w_l^{t\text{-}1,\infty}) - g_k(w^{t,0})\right)^{\!\!T}\!\!(w - w^{t,0})$$

Fig.~\ref{fig:geometric_rates_k} shows how these four quantities vary as a function of the number of nodes $K$ on the ALPHA dataset.

\begin{figure}[h]
\includegraphics[width=0.5\textwidth]{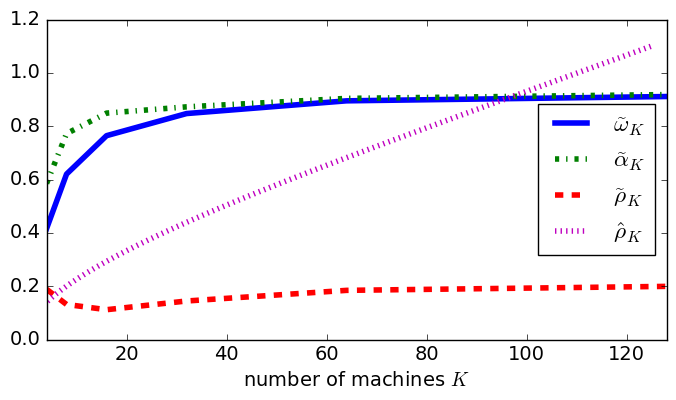}
\caption{Variation of the convergence speed of inner improvement versus best guaranteed regret when $K$ varies on ALPHA.}
\label{fig:geometric_rates_k}
\vspace*{-.3cm}
\end{figure}

Let us first focus on the excess error of $f$ after each synchronisation. Save for the lowest values of $K$, we observe that the decrease in excess error observed empirically, $\tilde\rho_K$ remains almost constant. This indicates that the decrease in convergence speed observed when increasing $K$ is almost exclusively due to the inner optimisation of the $f_k$'s slowing down.

This is confirmed by the variation of $\tilde\omega_K$. For low values of $K$, one pass through the data leads to a decrease in local excess error of more than half. However, it quickly degrades until, for large values of $K$, the decrease is worse than 0.8.

The degradation of $\tilde\omega_K$, the local convergence speed, drives the degradation of $\tilde\alpha_K$, the reduction in the excess error of $f$ during the inner optimisation.

Finally, we can observe that the theoretical bound on $\tilde\rho_K$, $\hat\rho_K$ is accurate for low values of $K$ but quickly becomes very loose. This is due to the proof technique used where an average over all nodes has been replaced with a maximum over those nodes. As the number of nodes increases, this bound becomes less accurate and the theoretical guarantees are far worse than the empirical results.

%

This leads us to conclude that, in all the regimes we studied, the convergence speed of dSAGA is mainly driven by the convergence speed of the local optimisations. This is promising as it shows the robustness of our distribution technique and its potential applicability to faster local optimisers than SAGA.

\section{Conclusion}
We proposed a distribution version of the SAGA algorithm which, by communicating an approximation of the state of each node to all the other nodes, outperforms L-BFGS while limiting the amount of communication between nodes. In settings where achieving high accuracy with large datasets is important, such as in online advertising, dSAGA offers the benefits of SAGA while maintaining a constant storage cost per node.

We also showed how dSAGA was affected by an increase in the number of nodes, concluding that, in order to maintain good speeds, each node should contain enough samples.

A few directions remain to be explored. First, it would be interesting to apply dSAGA to more recent optimisation techniques such as the catalyst method of~\textcite{lin2015universal}. We also think that designing an algorithm suited to asynchronous communication would provide great benefit, especially in the large $K$ setting where the likelihood of a node much slower than the others is greater.

\subsubsection*{Acknowledgements}
The authors would like to thank Victor Manuel Guerra Moran for his help in implementing dSAGA.
\printbibliography

\end{document}